\definecolor{ForestGreen}{rgb}{0.1,0.6,0.05}
\definecolor{EgyptBlue}{rgb}{0.063,0.1,0.6}
\newtheorem{theorem}{Theorem}
\newtheorem{proposition}[theorem]{Proposition}
\theoremstyle{definition}
\newtheorem{definition}[theorem]{Definition}
\newtheorem{remark}[theorem]{Remark}
\let\OLDthebibliography\thebibliography
\renewcommand\thebibliography[1]{
	\OLDthebibliography{#1}
	\setlength{\parskip}{1pt}
	\setlength{\itemsep}{1pt plus 0.3ex}
}
\numberwithin{equation}{section}
\numberwithin{theorem}{section}
\numberwithin{equation}{section}
\numberwithin{theorem}{section}
\newcommand{\rn}{\mathbb{R}^{N}}
\newtheorem{theo}{Theorem}[section]
\newtheorem{ex}[theo]{Example}
\numberwithin{equation}{section}
\def\namedlabel#1#2{\begingroup
    #2%
    \def\@currentlabel{#2}%
    \phantomsection\label{#1}\endgroup
}
\title[Existence of solutions]{Existence of solutions of degenerate semilinear elliptic boundary value problems}
\author[R.N.~Dhara]{Raj Narayan DHARA}
\address[R.N.~Dhara]{\newline\indent
	Department of Mathematics and and Statistics,
	\newline\indent 
	Faculty of Science, Masaryk University, 
	\newline\indent 
	Kotl\'a{}\v{r}sk\'a 2, 611 37 Brno, Czech Republic
}
\email{rndhara@math.muni.cz,~rndhara@gmail.com}
\date{}
\subjclass[2010]{35J61, 35J25, 35J70, 35J67, 35B40  
}
\keywords{Semilinear elliptic PDE, Existence theory, Large solution, sub- super solution}
\thanks{
	RND acknowledges the support of the Czech Science Foundation, project GJ19-14413Y.
}
\begin{document}
\begin{abstract} 
	We show an existence of a weak solution of a degenerate and/or singular semilinear elliptic boundary value (nonhomogeneous) problem lying between a given weak subsolution and a given weak supersolution. It has been applied for an existence result of large solution to a similar problem.
\end{abstract} 
\maketitle 

\section{Introduction}
We consider the semilinear elliptic nonhomogeneous boundary value problems (BVP)
\begin{align}
-{\rm div}(w(x) \nabla u(x)) +b(x)f(u)=h(x) \ \text{in}\ \Omega,\label{eq:1}\\
u(x)=g(x)\ \ \text{on}\ \partial\Omega,\label{eq:2}
\end{align}
where $\Omega\subset \mathbb{R}^N,\ (N\ge 1)$ is an open bounded domain with smooth boundary $\partial\Omega$. 
We give an existence proof of solution to the degenerate and/or singular semilinear elliptic problem~\eqref{eq:1} with nonhomogeneous boundary value~\eqref{eq:2}.
Then it has been used to show an existence of {\em large solution} of a radially symmetric form of~\eqref{eq:1} with $h\equiv 0$. In general, we say~\eqref{eq:1} admits large solution when a solution $u(x)\to \infty$ as $x\to\partial\Omega$. One can show the existence of a nonnegative large (weak) solution in between sub- and supersolution, $\underline{u}\le u\le \bar{u}$, which by construction explode at the boundary. 
The equation~\eqref{eq:1} may have degeneracy ($w^{-1}$ is unbounded) and/ or singularity ($w$ is unbounded) in $\bar{\Omega}$.
Furthermore, these problems have many useful applications to study various problems arising in the theory of nonlinear diffusion processes and the system of those equations.

In order to look for solution, we adapt a theory to degenerate case, in contrast to which developed in~\cite{hess76,hess74} for nondegenerate case, so called, a penalty method instead of applying monotone iteration scheme for sub- supersolution method. 
  Though, unlike the classical case, results on regularity of the solution are not easily available when one deals with degenerate BVP in weighted function spaces. Further regularity assumptions on the boundary and on the involved functions which make the problem of consideration be solvable in some sense but it turns out to be much more restrictive in view of applications widely. Here we present an approach adapting from~\cite{hess76} to weighted settings which requires no regularity assumptions. On the other hand to treat the nonhomogeneous boundary conditions it requires a trace operator defined in weighted Sobolev space. The trace and extension theory in weighted Sobolev space is available (e.g.,~~\cite{tyulenev2014,mironescu2015,koskela2017}) upto some restrictions to the associated weight functions. An extension result with less restrictions on the weight functions, for example, can be found by Ka\l{}amajska and the author in~\cite{rndak2} and references therein.
 
For this purpose, we give definitions of weak solutions in weighted Sobolev spaces with a brief introduction of the admissible class of weights in Section~\ref{prelim}. We give a general proof of existence of solution of~\eqref{eq:1}-\eqref{eq:2}, given a sub- and supersolution, as a main result with Dirichlet boundary condition in Section~\ref{sec:main}. As an application of the previous results, in Section~\ref{sec:large}, we give an existence result for large solution for a radially symmetric problem.

\section{Solution, subsolution and supersolution}\label{prelim}
We assume the following hypotheses for $f,b$ and $w$ in~\eqref{eq:1}:
\begin{enumerate}
\item[\namedlabel{itm:h1}{(H1)}] $\|b/w\|_{\infty}<\infty$.
\item[\namedlabel{itm:h2}{(H2)}] The function $f:\mathbb{R}\to\mathbb{R}$ is continuous and monotone.
\end{enumerate}
We investigate the problem~\eqref{eq:1}-\eqref{eq:2} in weighted Sobolev space. 
We give the definitions of weighted Sobolev spaces and their properties needed in the present context.
 We use the notation $u^{+}:=\max(u,0),\ u^{-}:=\min(u,0)$.

\subsection{Weighted Sobolev spaces}
We define weighted Sobolev spaces following the definitions and proposition related to the involved weights.
Let $\mathcal{M}(\Omega)$ the set of all Borel  measurable functions. Elements of set
\(
W(\Omega):= \left\{w\in \mathcal{M}(\Omega):\ 0<w(x)<\infty,\ {\rm for\ a.e.}\  x\in\Omega \right\},
\)
 will be called positive weights on $\Omega$. 

\begin{definition}[Class $B_2(\Omega)$]
\rm
Let $p>1$. We will say that a weight $w\in W(\Omega)$ satisfies $B_2(\Omega)$-condition
(or simply, $w\in B_2$) if
\(
w^{-1}\in L^1_{{\rm loc}}(\Omega).
\)
\end{definition}
\noindent
The following simple observation results follow from H\"older inequality.

\begin{proposition}[\cite{kuf-opic}]\label{Lp:inclu:L1}
Let $w\in B_2$. Then
\(
L^{2}(\Omega;w)\) is the subset of \(  L^1_{{\rm loc}}(\Omega)
\).
\end{proposition}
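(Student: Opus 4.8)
The plan is to reduce the claim to a single application of the Cauchy--Schwarz inequality, exploiting the factorization $|u| = \big(|u|\,w^{1/2}\big)\cdot w^{-1/2}$. Recall that membership $u \in L^{2}(\Omega;w)$ means $\int_\Omega |u|^2 w\,dx < \infty$, while $u \in L^1_{\rm loc}(\Omega)$ requires $\int_K |u|\,dx < \infty$ for every compact $K \subset \Omega$. So I would fix an arbitrary compact set $K \subset \Omega$ and estimate $\int_K |u|\,dx$ for a given $u \in L^{2}(\Omega;w)$.

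First I would write, for such a $K$,
\[
\int_K |u|\,dx = \int_K |u|\, w^{1/2}\, w^{-1/2}\,dx,
\]
which is legitimate since $w$ is a positive weight, so $w^{1/2}$ and $w^{-1/2}$ are finite a.e.\ on $\Omega$. Applying H\"older's inequality with the conjugate exponents $2$ and $2$ to the two factors $|u|\,w^{1/2}$ and $w^{-1/2}$ then yields
\[
\int_K |u|\,dx \le \left(\int_K |u|^2 w\,dx\right)^{1/2}\left(\int_K w^{-1}\,dx\right)^{1/2}.
\]

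The two factors on the right are both finite: the first is bounded by $\big(\int_\Omega |u|^2 w\,dx\big)^{1/2} < \infty$ because $u \in L^{2}(\Omega;w)$, and the second is finite precisely because $w \in B_2$ means $w^{-1} \in L^1_{\rm loc}(\Omega)$, so $\int_K w^{-1}\,dx < \infty$ on the compact set $K$. Hence $\int_K |u|\,dx < \infty$, and since $K$ was arbitrary this shows $u \in L^1_{\rm loc}(\Omega)$, completing the argument.

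There is no serious obstacle here; the only point requiring a moment's care is matching the two integrability hypotheses to the two factors produced by the splitting --- the $B_2$-condition is exactly what is needed to control $\int_K w^{-1}\,dx$, and the choice of the exponent pair $(2,2)$ is dictated by the quadratic weight appearing in $L^{2}(\Omega;w)$. If one wanted the analogous statement for a general $L^{p}(\Omega;w)$, one would instead split via $w^{1/p}\,w^{-1/p}$ and pair the conjugate exponents $p$ and $p'$, which is presumably why the cited source phrases the underlying inequality in greater generality.
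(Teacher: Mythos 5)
Your proof is correct and is exactly the argument the paper has in mind: the paper does not spell out a proof but cites Kufner--Opic with the remark that the result ``follows from H\"older inequality,'' and your splitting $|u| = \bigl(|u|\,w^{1/2}\bigr)\,w^{-1/2}$ with exponents $(2,2)$ is precisely that standard argument. Your closing observation about the general $L^p(\Omega;w)$ case, using $w^{-1/(p-1)} \in L^1_{\rm loc}(\Omega)$ and the exponent pair $(p,p')$, is also the correct way the cited source states it.
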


\begin{definition}[Weighted  \textit{Sobolev space}]
\rm
Let $w\in W(\Omega)$ be the given weight. The linear set
\begin{equation}\label{polnorma}
\left\{ f\in L^{1}_{{\rm loc}}(\Omega) : f, \frac{\partial f}{\partial x_1} , \dots ,\frac{\partial f}{\partial x_n} \in L^2 (\Omega;w)\right\},
\end{equation}
where $\frac{\partial f}{\partial x_i}$ are distributional derivatives, equipped with the norm
\begin{equation}\label{norm:weight}
\| f\|_{W^{1,2}(\Omega;w)} := \|  f\|_{L^2 (\Omega;w)} +\| \nabla f\|_{L^2 (\Omega;w:\rn)},
\end{equation}
will be called weighted Sobolev space, denoted by $W^{1,2}(\Omega;w)$.
  Here $\|\cdot\|^{2}_{L^2 (\Omega;w)}:=\int_{\Omega}|\cdot|^{2}w\, dx$. Consistently we  choose the norm in the space $L^2(\Omega; w:\rn)$ as
$
\| (v_1,\dots,v_n)\|_{L^2(\Omega;w:\rn)}^{2}:= \sum_{i=1}^N\| v_i\|^2_{L^2(\Omega;w)}
$. Symbol $W^{1,2}_{0}(\Omega;w)$ will denote  the completion of  set ${C}^{\infty}_{0}(\Omega)$  in the norm of the space $W^{1,2}(\Omega;w)$. 
\end{definition}
The following theorem states formally what additional conditions on the weights make it a Banach space.

\begin{proposition}[\cite{kuf-opic}]\label{wlasn}
Let $w \in B_2$. Then linear set $W^{1,2}(\Omega;w)$ defined by \eqref{polnorma} equipped with the norm \eqref{norm:weight} is a Banach space.
In particular
  $W_{0}^{1,2}(\Omega;w)\subseteq \{ f\in L^{1}_{\textit{loc}}(\Omega) : f, \frac{\partial f}{\partial x_1} , \dots ,\frac{\partial f}{\partial x_n} \in L_{w}^2 (\Omega )\}\subseteq W^{1,1}_{loc}(\Omega)$.
\end{proposition}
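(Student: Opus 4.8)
The plan is to handle the two assertions in turn: first that the norm \eqref{norm:weight} makes the linear set \eqref{polnorma} a Banach space, and then the chain of inclusions. That \eqref{norm:weight} is a genuine norm is routine — homogeneity and the triangle inequality are inherited from the constituent $L^2(\Omega;w)$-norms, while definiteness follows because $\|f\|_{L^2(\Omega;w)}=0$ forces $f=0$ a.e.\ (as $w>0$ a.e.). The real content is completeness, and here the hypothesis $w\in B_2$ enters decisively through Proposition~\ref{Lp:inclu:L1}: it is what upgrades weighted $L^2$-convergence to $L^1_{\mathrm{loc}}$-convergence, which is exactly what one needs in order to commute the limit with distributional differentiation.

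Concretely, I would start from a Cauchy sequence $\{f_k\}$ in $W^{1,2}(\Omega;w)$. By the shape of \eqref{norm:weight}, both $\{f_k\}$ and each $\{\partial f_k/\partial x_i\}$ are Cauchy in $L^2(\Omega;w)$, which is complete as an ordinary $L^2$-space for the measure $w\,dx$; hence there exist $f,g_1,\dots,g_N\in L^2(\Omega;w)$ with $f_k\to f$ and $\partial f_k/\partial x_i\to g_i$ in $L^2(\Omega;w)$. The one nontrivial step is to verify $g_i=\partial f/\partial x_i$ in the distributional sense. Fix $\varphi\in C_0^\infty(\Omega)$ and set $K:=\supp\varphi$, then use the defining identity $\int_\Omega f_k\,\partial_i\varphi\,dx=-\int_\Omega (\partial_i f_k)\varphi\,dx$. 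To pass to the limit it suffices that $f_k\to f$ and $\partial_i f_k\to g_i$ in $L^1(K)$, and this is precisely the H\"older estimate underlying Proposition~\ref{Lp:inclu:L1}, namely
\begin{equation*}
\int_K |f_k-f|\,dx\le\Big(\int_K|f_k-f|^2 w\,dx\Big)^{1/2}\Big(\int_K w^{-1}\,dx\Big)^{1/2},
\end{equation*}
where $\int_K w^{-1}\,dx<\infty$ because $w^{-1}\in L^1_{\mathrm{loc}}(\Omega)$. Letting $k\to\infty$ gives $\int_\Omega f\,\partial_i\varphi\,dx=-\int_\Omega g_i\varphi\,dx$, so $g_i=\partial_i f$ weakly, $f$ lies in the set \eqref{polnorma}, and $f_k\to f$ in the norm \eqref{norm:weight}. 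This establishes completeness.

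For the inclusions, once completeness is in hand the first is immediate: $C_0^\infty(\Omega)$ is a subset of the Banach space $W^{1,2}(\Omega;w)$, so its completion $W^{1,2}_0(\Omega;w)$ may be identified with its closure there, whence $W^{1,2}_0(\Omega;w)$ is contained in the linear set \eqref{polnorma}, the middle term of the asserted chain. The second inclusion reuses the same H\"older bound: if $f$ and all $\partial_i f$ belong to $L^2(\Omega;w)$, then applying the displayed estimate to $f$ and to each $\partial_i f$ (now without the difference, over an arbitrary compact $K\subset\Omega$) shows $f,\partial_i f\in L^1_{\mathrm{loc}}(\Omega)$, i.e.\ $f\in W^{1,1}_{\mathrm{loc}}(\Omega)$.

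I expect the genuine obstacle to be the closedness step — identifying the $L^2(\Omega;w)$-limits of the derivatives as the weak derivative of the limit. It is exactly here that the $B_2$-condition is indispensable: without some local integrability of $w^{-1}$ the weighted-$L^2$ limit need not even be locally integrable, the distributional derivative of $f$ need not be defined, and the space can fail to be complete. Everything else is bookkeeping resting on the single H\"older inequality behind Proposition~\ref{Lp:inclu:L1}.
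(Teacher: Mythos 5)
Your proof is correct, and it is essentially the canonical argument: the paper itself gives no proof of this proposition (it is quoted from Kufner--Opic \cite{kuf-opic}), and your route --- completeness of $L^2(\Omega;w)$, the H\"older/Cauchy--Schwarz estimate $\int_K|f_k-f|\,dx\le\|f_k-f\|_{L^2(\Omega;w)}\bigl(\int_K w^{-1}\,dx\bigr)^{1/2}$ to upgrade weighted-$L^2$ convergence to $L^1_{\mathrm{loc}}$ convergence and pass to the limit in the distributional identity, then identifying the completion $W^{1,2}_0(\Omega;w)$ with the closure of $C_0^\infty(\Omega)$ inside the now-complete space --- is exactly the one used in that reference. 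Nothing to add beyond noting that, as in the paper's own definition, the first inclusion tacitly presumes $C_0^\infty(\Omega)\subseteq L^2(\Omega;w)$ (i.e.\ test functions have finite weighted norm), a point inherited from the paper's setup rather than a gap in your argument.
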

Note that, the space $W^{1,2}_{0}(\Omega;w)$ is reflexive Banach space, see~ \cite[Remark 3.2, part 6)]{rndak3}.

We say a function $u\in W^{1,2}_{\rm loc}(\Omega;w)$ if and only if $u\in W^{1,2}(\Omega';w)$ for each open set $\Omega'$ whose closure is compact subset of $\Omega$.

The dual space of $W^{1,2}(\Omega;w)$ is denoted by $[W^{1,2}(\Omega;w)]^*=W^{-1,2}(\Omega;\rho)$, where $\rho=w^{-1}$ and $\langle\cdot ,\cdot\rangle$ represents the action between the duality pair. Note that if $w$ is a $2$-admissible weights (see e.g.~\cite[Section 1.1]{heino12}) then it also satisfies the conditions for the weights in Remark~\cite[Proposition 3.1]{rndak3}, and we have
\begin{align*}
W^{-1,2}(\Omega;\rho)=\left\{T=h_0 + {\rm div}\tilde{h}: h=(h_0,\tilde{h});\ h_0\in L^{2}(\Omega;\rho),\tilde{h}\in L^{2}(\Omega;\rho:\mathbb{R}^{n}) \right\},
\end{align*}
where $\tilde{h}=(h_1,h_2,\cdots,h_n)\in\mathbb{R}^{n}$. Furthermore, if the functions from the weighted Sobolev space $W^{1,2}(\Omega;w)$ admits Poincar\'e inequality then the first component $h_0$ in the above expression of the functional can be removed and vice-versa, see~\cite[Theorem 3.4]{rndak3}. 

Since our methodology primarily depends on the existence of an extension operator from the boundary of the given domain to the domain, we give some possible admissible class of weights for which an extension operator exists in weighted settings, see~\cite[Theorem 3.3]{rndak2}.
\begin{ex}\label{ilustracje}\rm
Let $w (x)=\tau ({\rm dist} (x,\partial\Omega))$. 
(i) $\tau\equiv 1$; (ii) $\tau(t) = t^{\alpha}$, $-1<\alpha <n-1$;
(iii)  $\tau(t)=t^{\alpha}\left(\ln\left(2+\frac{1}{t}\right)\right)^{\beta}$, $-1<\alpha <n-1,
\beta >0$;
(iv) $\tau (t)=(\log (2+ \frac{1}{t}))^{-\alpha}, \alpha >0$;
(v) $\tau (t)= 1-e^{\alpha t}$, $\alpha <0$, $n>2$.
\end{ex}
For other results on existence of an extension operator exists for Muckenhoupt class of weights, see e.g.~\cite{rndak2,koskela2017,mironescu2015,tyulenev2014} and references therein. In general, we can deal with such weight functions $w$ which either may vanish somewhere in $\overline{\Omega}$ or increase to infinity or both. 

We shall use the following definitions of solution, sub- and supersolution in weighted Sobolev spaces.
\begin{definition}[Boundary data]\label{def:boundary}
We say `$u= g$ on $\partial\Omega$' in the sense that $u-g\in W^{1,2}_{0}(\Omega;w)$ and `$u\ge g$ on $\partial\Omega$' in the sense that $(g-u)^{+}\in W^{1,2}_{0}(\Omega;w)$.
\end{definition}
\begin{definition}[Solutions]\label{def:wk:sol}
Let $w\in B_2$. Assuming~\ref{itm:h1}-\ref{itm:h2}, for a given $h\in W^{-1,2}(\Omega;\tau)$, a function $u\in W^{1,2}(\Omega;w)$ is a (weak) {\em supersolution (respectively, subsolution)} of~\eqref{eq:1}-\eqref{eq:2}
if $f(u)\in L^{2}(\Omega;w)$ and
\begin{align}
\int_{\Omega}\nabla u\cdot\nabla\varphi\, w(x)dx + \int_{\Omega} b(x)f(u) \varphi\, dx &\ge (\text{resp.},\ \le )\langle h(x) \varphi\rangle\quad\text{in}\ \Omega,\label{eq:3}\\
u
&\ge (\text{resp.} \le)\ g\qquad\text{on}\ \partial\Omega.\label{eq:3a}
\end{align}
for all $0 \le \varphi\in W^{1,2}_{0}(\Omega;w)$, which have compact support in $\Omega$.
We say a function $u\in W^{1,2}(\Omega;w)$ is a (weak) {\em solution} to~\eqref{eq:1}-\eqref{eq:2} if equalities hold in~\eqref{eq:3}-\eqref{eq:3a} for all $\varphi\in W^{1,2}_{0}(\Omega;w)$, which has compact support in $\Omega$. We say $u\in W^{1,2}(\Omega;w)$ is a {\em (weak) solution} of~\eqref{eq:1} on $\Omega$ if the equality in~\eqref{eq:3} holds only.

A function $u\in W^{1,2}_{\rm loc}(\Omega;w)$ is said to be a {\em local weak solution} to~\eqref{eq:1} in $\Omega$ if $u$ is a weak solution of~\eqref{eq:1} on $K$ for every compactly embedded subset $K\Subset\Omega$.
\end{definition}
\begin{definition}[Blow-up solution]\label{def:wk:large:sol}
A local weak solution $u$ of~\eqref{eq:1} is said to be a local weak {\em large (blow-up) solution} if $u$ is continuous on $\Omega$ and
\begin{align*}
u(x)\to\infty,\quad\text{as}\ x\to\partial\Omega.
\end{align*}
\end{definition}
\begin{remark}
We associate the principal linear part of~\eqref{eq:3} with a bilinear form,
$a(u,\varphi):=\int_{\Omega}\nabla u\cdot\nabla\varphi\, w(x)dx,\
$
which is then well-defined and bounded in $W^{1,2}_{0}(\Omega;w)\times W^{1,2}_{0}(\Omega;w)$. As a consequence of degenerate ellipticity condition, we have for some $c>0$,
\begin{align}\label{eq:degen:cond}
a(u,u)\ge c\|\nabla u\|^{2}_{L^{2}(\Omega;w)},\quad	\forall u\in W^{1,2}_{0}(\Omega;w).
\end{align}
\end{remark}

\section{Main result}\label{sec:main}

\begin{theorem}\label{thm:exis:sub:sup}
Suppose that $\underline{u},\ \bar{u}$ are weak sub- and supersolution (see Definition~\ref{def:wk:sol}) to problem~\eqref{eq:1}-\eqref{eq:2}, respectively, such that $\underline{u}\le \bar{u}$ a.e. in $\Omega$. Furthermore, assume that
\begin{align}\label{eq:bnd:f}
\int_{\Omega} \sup_{\underline{u}(x)\le t\le \bar{u}(x)}|f(t)|^2 w(x) \, dx<\infty.
\end{align}
Then problem~\eqref{eq:1}-\eqref{eq:2} has a weak solution $u$ with $\underline{u}\le u\le \bar{u}$ a.e. in $\Omega$. 
\end{theorem}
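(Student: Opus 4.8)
The plan is to adapt the penalty--truncation scheme of Hess \cite{hess76} to the weighted degenerate framework, solving a modified problem whose solutions are forced, a posteriori, to lie in $[\underline u,\bar u]$. First I would freeze the nonlinearity by truncation: writing $Ts:=\max\{\underline u(x),\min\{s,\bar u(x)\}\}$ and $\widehat f(x,s):=f(Ts)$, assumption \eqref{eq:bnd:f} yields the pointwise bound $|\widehat f(x,s)|\le F(x):=\sup_{\underline u(x)\le t\le\bar u(x)}|f(t)|$ with $F\in L^2(\Omega;w)$, so that $\widehat f(\cdot,v)$ is bounded in $L^2(\Omega;w)$ uniformly in $v$. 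I would then add the penalty $\beta(x,s):=(s-\bar u(x))^{+}-(\underline u(x)-s)^{+}$, which is nondecreasing in $s$ and vanishes exactly when $\underline u(x)\le s\le\bar u(x)$, and study the auxiliary problem of finding $u\in g+W^{1,2}_{0}(\Omega;w)$ with
\begin{equation*}
\intO\nabla u\cdot\nabla\varphi\, w\,dx+\intO b\,\widehat f(x,u)\,\varphi\,dx+\intO\beta(x,u)\,\varphi\, w\,dx=\langle h,\varphi\rangle
\end{equation*}
for every $\varphi\in W^{1,2}_{0}(\Omega;w)$.

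Second, I would solve this auxiliary problem by recasting it as an operator equation $\mathcal A u=h$ with $\mathcal A$ acting between $g+W^{1,2}_{0}(\Omega;w)$ and $W^{-1,2}(\Omega;\rho)$, and verifying the standard hypotheses of the theory of monotone/pseudomonotone operators. Boundedness is immediate: the principal bilinear form is bounded, the term $\intO b\,\widehat f(x,u)\varphi\,dx$ is controlled by $\|b/w\|_{\infty}\|F\|_{L^2(\Omega;w)}\|\varphi\|_{L^2(\Omega;w)}$ using (H1) and the majorant $F$, and $\beta(\cdot,u)$ has linear growth and hence lies in $L^2(\Omega;w)$. Coercivity follows from the degenerate ellipticity \eqref{eq:degen:cond}, which controls the gradient, together with the monotone penalty, whose contribution controls the $L^2(\Omega;w)$-norm once $|u|$ leaves the order interval. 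For the monotonicity structure, the principal part and $\beta$ are monotone and hemicontinuous; the lower-order term $u\mapsto b\,\widehat f(\cdot,u)$ is bounded and, under (H2), either enters monotonically so that Browder--Minty applies directly, or is handled as a completely continuous perturbation --- using the majorant $F$, dominated convergence, and the compact embedding $W^{1,2}_{0}(\Omega;w)\hookrightarrow\hookrightarrow L^2(\Omega;w)$ --- making $\mathcal A$ pseudomonotone; in either case surjectivity of coercive (pseudo)monotone operators provides a solution $u$.

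Third comes the decisive comparison step, which confines $u$ to the order interval. Since $C_0^\infty(\Omega)$ is dense in $W^{1,2}_0(\Omega;w)$ and every term above is continuous in $\varphi$, the auxiliary identity and the sub/supersolution inequalities \eqref{eq:3} extend from compactly supported test functions to all of $W^{1,2}_0(\Omega;w)$. I would test the auxiliary equation and the supersolution inequality for $\bar u$ with $\varphi=(u-\bar u)^{+}$, which belongs to $W^{1,2}_0(\Omega;w)$ by the lattice bound $0\le(u-\bar u)^{+}\le(u-g)^{+}+(g-\bar u)^{+}$ together with the boundary conditions, and subtract the two. On $\{u>\bar u\}$ the truncation gives $\widehat f(x,u)=f(\bar u)$, so the nonlinear contribution cancels, while $\beta(x,u)=u-\bar u$ there; what survives is
\begin{equation*}
\intO\bigl|\nabla(u-\bar u)^{+}\bigr|^{2}w\,dx+\int_{\{u>\bar u\}}(u-\bar u)^{2}w\,dx\le 0,
\end{equation*}
whence $(u-\bar u)^{+}=0$, i.e. $u\le\bar u$ a.e. The symmetric choice $\varphi=(\underline u-u)^{+}$ tested against the subsolution inequality gives $u\ge\underline u$ a.e. On $[\underline u,\bar u]$ one has $\widehat f(x,u)=f(u)$ and $\beta(x,u)=0$, and $f(u)\in L^2(\Omega;w)$ by \eqref{eq:bnd:f}; thus the penalty and truncation become inert and $u$ is a genuine weak solution of \eqref{eq:1}--\eqref{eq:2} in the sense of Definition \ref{def:wk:sol}, with $\underline u\le u\le\bar u$.

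I expect the main obstacle to be the solvability of the auxiliary problem in the degenerate weighted setting, rather than the comparison step. Establishing coercivity and the (pseudo)monotone structure of $\mathcal A$ relies on the functional-analytic properties of $W^{1,2}_0(\Omega;w)$ --- reflexivity, the duality with $W^{-1,2}(\Omega;\rho)$, and, where the nonlinearity is not monotone, the compactness of the embedding into $L^2(\Omega;w)$ --- all of which are delicate when $w$ or $w^{-1}$ is unbounded. A secondary but nontrivial point, more demanding than in the unweighted case, is the justification of the lattice and chain-rule manipulations, namely that $(u-\bar u)^{+},(\underline u-u)^{+}\in W^{1,2}_0(\Omega;w)$ and that the pointwise identities for $\widehat f$ and $\beta$ on the level sets $\{u>\bar u\}$ and $\{u<\underline u\}$ hold almost everywhere; these rest on Stampacchia-type truncation results in weighted Sobolev spaces.
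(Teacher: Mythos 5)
Your proposal is correct and follows essentially the same route as the paper: the same truncation $\tilde f$, the same penalty term $(u-\bar u)^{+}-(\underline u-u)^{+}$, solvability of the penalized problem by coercivity plus Browder--Minty/pseudomonotone operator theory, and the same comparison step testing with positive/negative parts to force $\underline u\le u\le\bar u$ (the paper merely reduces first to homogeneous boundary data via an extension $\hat g$ with $\underline u\le\hat g\le\bar u$, whereas you work in the affine space $g+W^{1,2}_{0}(\Omega;w)$). The only differences are cosmetic, and your explicit attention to the density/lattice justifications is if anything more careful than the paper's own write-up.
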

\begin{proof}
Let us denote by $\hat{g}\in W^{1,2}(\Omega;w)$ an extension of $g$ to $\Omega$ such that $\underline{u}\le \hat{g}\le \bar{u}$ a.e. in $\Omega$.
Then by definition of an extension operator, we have $\hat{g}_{|_{\partial\Omega}}=g$. Now the change of variable $u\mapsto u-\hat{g}$, reduce the problem to homogeneous boundary value problem. We can assume from now on that $\hat{g}=0$ and $\underline{u}\le 0\le \bar{u}$ a.e. in $\Omega$.

Let us define the modfied function $\tilde{f}$ as follows.
\begin{align}\label{eq:assum:f}
\tilde{f}(t):=
\begin{cases}
f(\underline{u}(x)),\quad t\le\underline{u}(x),\\
f(t),\quad \underline{u}(x)\le t\le \bar{u}(x),\\
f(\bar{u}(x))\quad t\ge \bar{u}(x),
\end{cases}
\end{align}
a.e. $x\in\Omega$ and for all $t\in\mathbb{R}$. We note that there exists a constant $c_0(\|b/w\|_{\infty},\|\tilde{f}\|_{L^{2}(\Omega;w)})=:c_0>0$ such that
\begin{align*}
\left|\int_{\Omega}b(x)\tilde{f}(u)v\, dx \right|\stackrel{\ref{itm:h1},\ \text{H\"older's},~\eqref{eq:bnd:f}}{\le} c_0\|v\|_{L^{2}(\Omega;w)}\quad\forall\ u,v\in L^{2}(\Omega;w).
\end{align*}
We consider the perturbed problem
\begin{align}\label{eq:per:1}
\tilde{a}(u,v)=\langle h,v\rangle,\qquad\forall\ v\in W^{1,2}_{0}(\Omega;w),
\end{align}
where
\begin{align*}
\tilde{a}(u,v)&:=a(u,v)+\int_{\Omega}b(x)\tilde{f}(u)v\, dx+\beta\int_{\Omega}\left((u-\underline{u})^{-}+(u-\bar{u})^{+}\right)v\, w(x)dx,\quad \beta>0,
\end{align*}
and show that~\eqref{eq:per:1} has a solution $u\in W^{1,2}_{0}(\Omega;w)$ which will essentially be a weak solution to our original problem~\eqref{eq:1}-\eqref{eq:2}.

The existence of solution of problem~\eqref{eq:per:1} will follow by employing a variant of the Minty Browder Theorem, see e.g.~\cite[Theorem 26.A, Corollary 27.19]{zaidler}, \cite[Theorem 2.1, p-171]{lions1969}, when the semilinear part $\tilde{a}$ is global coercive. For doing so, we note that
\begin{align*}
\left((t-\underline{u}(x))^{-}+(t-\bar{u}(x))^{+}\right)t\ge |t|^2 -\max\{|\underline{u}(x)|,|\bar{u}(x)|\}|t|,
\end{align*}
for any $t\in\mathbb{R}$ and fixed $x\in\Omega$.
Then the perturbed part, for some constant $c_1:=c_1(\|\max\{|\underline{u}(x)|,|\bar{u}(x)|\}\|_{L^2(\Omega;w)})$, can be estimated as follows.
\begin{align*}
\beta\int_{\Omega}\left((v-\underline{u})^{-}+(v-\bar{u})^{+}\right)v\, w(x)dx\ge \beta\|v\|^{2}_{L^{2}(\Omega;w)} -c_1\|v\|_{L^{2}(\Omega;w)},\quad\forall\ v\in W^{1,2}_{0}(\Omega;w).
\end{align*}
Consequently, using~\eqref{eq:degen:cond}, for $\tilde{c}:=\min\{c,\beta\}$, 
$ \tilde{a}(v,v)\ge \tilde{c} \|v\|^{2}_{W^{1,2}_{0}(\Omega;w)}-(c_0 +c_1)\|v\|_{L^{2}(\Omega;w)},\ \forall v\in W^{1,2}_{0}(\Omega;w),
$
and
\begin{align*}
\tilde{a}(v,v)/ \|v\|_{W^{1,2}_{0}(\Omega;w)}\rightarrow +\infty\qquad\text{as}\ \|v\|_{W^{1,2}_{0}(\Omega;w)}\to\infty.
\end{align*}
Hence, $\tilde{a}$ satisfying its global coerciveness condition, we have by~\cite[Corollary 27.19]{zaidler}, problem~\eqref{eq:per:1} has a solution. Now to complete the proof we only need to show that $\underline{u}\le u\le\bar{u}$ a.e. in $\Omega$, since then
$
\tilde{f}(u)=f(u).
$
Take the test function $v=(u-\underline{u})^-\in W^{1,2}_{0}(\Omega;w)$, see Definition~\ref{def:boundary}, and obtain
\begin{align}\label{eq:pert:2}
a(u,(u-\underline{u})^-)+\int_{\Omega}b(x)\tilde{f}(u)(u-\underline{u})^-\, dx
+\beta\|(u-\underline{u})^{-}\|_{L^{2}(\Omega;w)}^2~~~~~~~~~~~~~~~~~~ \nonumber\\
+\beta\int_{\Omega}(u-\bar{u})^{+}(u-\underline{u})^{-}\, w(x)dx
=\langle h,(u-\underline{u})^{-}\rangle.
\end{align}
Note that, since $\underline{u}\le\bar{u}$,
$\beta\int_{\Omega}(u-\bar{u})^{+}(u-\underline{u})^{-}\, w(x)dx=0.\
$
One can write using the bilinear property of $a(u,\cdot)$, to write
$a(u,(u-\underline{u})^-)=a(u-\underline{u},(u-\underline{u})^-)+a(\underline{u},(u-\underline{u})^-),\
$
and along with the fact of subsolution $\underline{u}$,
\begin{align*}
a(\underline{u},(u-\underline{u})^-)\ge \langle h, (u-\underline{u})^-\rangle - \int_{\Omega}b(x)f(\underline{u})(u-\underline{u})^-\, dx.
\end{align*}
From~\eqref{eq:pert:2}, we have
\begin{align}\label{eq:pert:3}
a(u-\underline{u},(u-\underline{u})^-)
+\int_{\Omega}b(x)(\tilde{f}(u)-f(u))(u-\underline{u})^-\, dx+\beta\|(u-\underline{u})^{-}\|_{L^{2}(\Omega;w)}^2 \le 0.
\end{align}
From the definition~\eqref{eq:assum:f} of $\tilde{f}$, we can see that
$\int_{\Omega}b(x)(\tilde{f}(u)-f(u))(u-\underline{u})^-\, dx=0.\
$
Also using the relation
$a((u-\underline{u}),(u-\underline{u})^-)=a((u-\underline{u})^-,(u-\underline{u})^-)\
$
to~\eqref{eq:pert:3}, we obtain
\begin{align*}
\|(u-\underline{u})^-\|^{2}_{W^{1,2}_{0}(\Omega;w)}\le 0.
\end{align*}
This implies that
$(u-\underline{u})^-=0\Rightarrow\ u(x)\ge \underline{u}(x)\quad \text{pointwise a.e. in}\ \Omega.\
$
Similarly, one can prove that
$(u-\bar{u})^+=0\Rightarrow\ u(x)\le \bar{u}(x)\quad \text{pointwise a.e. in}\ \Omega.\
$
Hence the proof of the theorem.
\end{proof}
\begin{remark}
 The above method can be applied to a boundary value problems associated with more general elliptic operator of the form: $Lu + p(\cdot, u, \nabla u)$, where $L$ is a second order quasilinear differential operator following a necessary modifications adapting from~\cite{hess74}.
\end{remark}
\begin{remark}
The above method may be applicable to a Neumann boundary value problem, with the boundary condition given as $w(x)\nabla u(x)\cdot n_{x}=q(x,u(x))$ and $n_x$ is the outward normal at the point $x\in\partial\Omega$ on $x\in\partial\Omega$, upto some given condition on $q$.
\end{remark}


\section{Large solutions}\label{sec:large}

Using Theorem~\ref{thm:exis:sub:sup} and following the techniques from~\cite{lopez2003} in our setting, we give an existence of a large solution to the problem~\eqref{eq:1}-\eqref{eq:2}.
\begin{theorem}\label{thm:exis}
Let $D$ be a bounded domain with smooth boundary, $w\in A_2$  and $\underline{u},\overline{u}\in W^{1,2}_{\rm loc}(D;w)\cap C(D)$ are the weak sub- and supersolution of~\eqref{eq:1} with $h\equiv 0$ according to Definition~\ref{def:wk:sol} in $D$ such that
$
\underline{u}\le \overline{u}\ \text{a.e. in}\ D.
$
Furthermore, we assume that the assumptions of Theorem~\ref{thm:exis:sub:sup} hold in each compact subsets of $D$
and 
\begin{align}\label{eq:infty:cond}
\lim_{\rm d(x,\partial D)\to 0}\underline{u}(x)=\infty\qquad\text{and}\qquad\lim_{\rm d(x,\partial D)\to 0}\overline{u}(x)=\infty,
\end{align}
Then \eqref{eq:1} possesses a (weak) solution $u\in W^{1,2}_{\rm loc}(D;w)\cap C(D)$ in between $\underline{u}$ and $\overline{u}$ such that $u\to\infty$ as ${\rm d(x)\to 0}$.
\end{theorem}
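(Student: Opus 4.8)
The plan is to build the large solution $u$ as the monotone limit of solutions obtained from Theorem~\ref{thm:exis:sub:sup} on an exhausting family of subdomains, and to read the boundary blow-up directly off the subsolution. First I would fix a smooth exhaustion $D_1\Subset D_2\Subset\cdots$ with $\overline{D_n}\subset D_{n+1}$ and $\bigcup_n D_n=D$. On each $D_n$ the restrictions of $\underline{u}$ and $\overline{u}$ form a weak sub- and supersolution of \eqref{eq:1} (with $h\equiv 0$) satisfying $\underline{u}\le\overline{u}$; since $\overline{D_n}$ is compact in $D$ they lie in $W^{1,2}(D_n;w)$, by hypothesis the bound \eqref{eq:bnd:f} holds over $D_n$, and $w\in A_2$ implies $w\in B_2$, so all hypotheses of Theorem~\ref{thm:exis:sub:sup} are met on $D_n$. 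Applying it to the Dirichlet problem on $D_n$ with boundary datum $g=\underline{u}|_{\partial D_n}$ yields a weak solution $u_n$ with $\underline{u}\le u_n\le\overline{u}$ a.e.\ in $D_n$.

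Next I would establish that the sequence is nondecreasing. For $m>n$ both $u_m$ and $u_n$ solve \eqref{eq:1} on $D_n$, and on $\partial D_n$ one has $u_m\ge\underline{u}=u_n$ because $u_m\ge\underline{u}$ throughout $D_m\supset\overline{D_n}$. Testing the difference of the two weak formulations with $(u_n-u_m)^+\in W^{1,2}_0(D_n;w)$ and invoking the degenerate ellipticity \eqref{eq:degen:cond} together with the monotonicity of $f$ from \ref{itm:h2} forces $(u_n-u_m)^+=0$, i.e.\ $u_n\le u_m$ on $D_n$; this is the same algebraic comparison mechanism already used at the end of the proof of Theorem~\ref{thm:exis:sub:sup}. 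Consequently, for each fixed $x\in D$ the values $u_n(x)$ are defined and nondecreasing for all large $n$ and bounded above by $\overline{u}(x)<\infty$, so $u_n\uparrow u$ pointwise a.e.\ with $\underline{u}\le u\le\overline{u}$, and $u$ is finite on $D$ since $\overline{u}\in C(D)$.

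Then I would pass to the limit to identify $u$ as a local weak solution. Fixing $K\Subset D$ and choosing $K\Subset K'\Subset D_n$ for $n$ large, a Caccioppoli-type energy estimate on $K'$ — using \eqref{eq:bnd:f} to control $\int b\,f(u_n)\varphi$ uniformly in $n$ (because $\underline{u}\le u_n\le\overline{u}$) — produces a bound for $u_n$ in $W^{1,2}(K;w)$ independent of $n$. By reflexivity of $W^{1,2}(K;w)$ a subsequence converges weakly, and since $u_n\uparrow u$ pointwise the weak limit must be $u$; the nonlinear term converges because $f(u_n)\to f(u)$ a.e.\ by continuity of $f$ and is dominated by the integrable majorant in \eqref{eq:bnd:f}. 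Passing to the limit in the weak formulation then shows $u$ is a weak solution of \eqref{eq:1} on every $K\Subset D$, i.e.\ a local weak solution on $D$.

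Finally I would verify continuity and blow-up. The blow-up is immediate from the sandwich $u\ge\underline{u}$ and \eqref{eq:infty:cond}: $u(x)\ge\underline{u}(x)\to\infty$ as $\mathrm{d}(x,\partial D)\to 0$. Local continuity is precisely where the stronger assumption $w\in A_2$ (rather than merely $w\in B_2$) is needed: for $A_2$ weights the De Giorgi--Nash--Moser theory of Fabes--Kenig--Serapioni supplies interior Hölder estimates for solutions of the degenerate operator with the bounded lower-order term $b\,f(u_n)$, uniform on compact subsets, and these estimates transfer to the increasing limit, giving $u\in W^{1,2}_{\rm loc}(D;w)\cap C(D)$. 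I expect this last step to be the main obstacle: the exhaustion, comparison, and monotone-limit arguments are robust and essentially self-contained, but securing \emph{uniform} local continuity for the weighted degenerate operator — so that the pointwise increasing limit is genuinely continuous and not merely measurable — is the delicate point, and it is exactly what forces the Muckenhoupt hypothesis $w\in A_2$.
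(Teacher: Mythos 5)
Your overall architecture---exhaust $D$ by smooth subdomains, solve on each $D_n$ via Theorem~\ref{thm:exis:sub:sup} between the given sub- and supersolution, and pass to a limit---is the same skeleton as the paper's proof (the paper takes boundary datum $(\underline{u}+\overline{u})/2$ rather than $\underline{u}$, an immaterial difference). The divergence is in the limit mechanism: the paper extracts a locally uniformly convergent diagonal subsequence via Ascoli--Arzel\`a, whereas you try to make the whole sequence nondecreasing and take a pointwise monotone limit.

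That monotonicity step is a genuine gap. Your comparison of $u_m$ and $u_n$ on $D_n$ requires that, after testing with $(u_n-u_m)^+$, the term
$\int_{D_n} b(x)\bigl(f(u_n)-f(u_m)\bigr)(u_n-u_m)^+\,dx$
be nonnegative. That needs $b\ge 0$ together with $f$ nondecreasing, i.e., $t\mapsto b(x)f(t)$ nondecreasing. Neither is available: \ref{itm:h1} imposes only $\|b/w\|_{\infty}<\infty$ with no sign restriction on $b$, and \ref{itm:h2} says $f$ is ``monotone,'' which includes nonincreasing $f$. Moreover, this is \emph{not} ``the same algebraic comparison mechanism'' used at the end of the proof of Theorem~\ref{thm:exis:sub:sup}: there the nonlinear term vanished identically because on the set $\{u<\underline{u}\}$ one has $\tilde{f}(u)=f(\underline{u})$ by the very definition~\eqref{eq:assum:f} of the truncation---no monotonicity of $f$ entered at all. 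For two genuine solutions of the same equation there is no truncation to invoke, and under \ref{itm:h1}--\ref{itm:h2} alone solutions of the same Dirichlet problem need not be ordered (comparison and uniqueness can fail), so $\{u_n\}$ need not be monotone and your pointwise limit need not exist. The repair is to replace monotonicity by compactness: either combine your (correct) uniform local Caccioppoli bounds with a compact embedding to get an a.e.-convergent subsequence, or, as the paper does, use uniform interior H\"older estimates---which is indeed where $w\in A_2$ and the Fabes--Kenig--Serapioni theory enter, as you correctly observed---together with Ascoli--Arzel\`a and a diagonal argument over the exhaustion. Your remaining steps (identification of the limit as a local weak solution by weak convergence plus dominated convergence, blow-up from $u\ge\underline{u}$ and \eqref{eq:infty:cond}, continuity from the $A_2$ regularity theory) are sound, and are in fact spelled out in more detail than in the paper's own proof.
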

\begin{proof}
We consider
$D_n:=\left\{ x\in D:\ {\rm dist}(x,\partial D)>1/n,\ n\ge n_0\right\},
$
 where $n_0\ge 1$ should be chosen in such a way that $\partial D_n$ be a smooth boundary.
 Clearly, $D_n\subseteq D_{n+1}$ for $n\ge 1$  and
$\partial  D_n=\left\{ x\in  D:\ {\rm dist}(x,\partial D)=1/n\right\}\subset D.\
$
 Consider the problem
\begin{align}\label{eq:10}
\begin{cases}
-{\rm div}(w(x)\nabla u) + b(x)f(u) = 0\quad \text{in}\  D_n,\\
u =(\underline{u}+ \overline{u})/2\quad \text{on}\ \partial D_n,
\end{cases}
\end{align}
Clearly $\underline{u}$ and $\bar{u}$ are the sub- and supersolution of~\eqref{eq:10} as per Definition~\ref{def:wk:sol}. Then by Theorem~\ref{thm:exis:sub:sup}, the BVP~\eqref{eq:10} possesses a solution $u_n\in W^{1,2}( D_n;w)\cap C(\bar{ D}_{n})$ 
 such that $u_n - (\underline{u}+ \overline{u})/2\in W^{1,2}_{0}( D_n;w)$ and
\begin{align}\label{eq:11}
\underline{u}_{|_{ D_n}}\le u_n\le \overline{u}_{|_{ D_n}}\qquad\ \text{in}\  D_n.
\end{align}
Taking advantage of~\eqref{eq:11} 
together with Ascoli-Arzel\'a theorem gives rise to an existence of subsequence of $\{u_n\}_{n\ge n_0}$, say $\{u_{n_m}\}_{m\ge 1}$, for which
\begin{align*}
\lim_{m\to \infty}\|u_{n_m}-u_0\|_{C(\bar{ D}_{n_0})}=0,
\end{align*}
for some solution $u_0\in C(\bar{ D}_{n_0})$ of~\eqref{eq:10}. Let us consider the new restricted sequence $\{{u_{n_m}}{|_{ D_{n_1}}}\}_{m\ge 1}$ which again by the above reasoning, shows the existence of a subsequence $\{{u_{n_{m_l}}}\}_{l\ge 1}$ of $\{{u_{n_m}}{|_{ D_{n_1}}}\}_{m\ge 1}$, relabeled by $n_m$, such that, for some $u_1\in C(\bar{ D}_{n_1})$,
\begin{align*}
\lim_{l\to \infty}\|u_{n_{m_l}}-u_1\|_{C(\bar{ D}_{n_1})}=0.
\end{align*}
Note that, the continuity of the solution $u_1$ in $ D_{n_1}$, gives us $u_{1}|_{ D_{n_0}}=u_0$. Repeating this argument infinitely many times,  as a the pointwise limit of the diagonal sequence $\{ u_{m_m}\}$, we get the  required solution in between $\underline{u}$ and $\overline{u}$.
\end{proof}

We show by using Theorem~\ref{thm:exis} now the existence of the large solutions for two radially symmetric problems in a ball $B_{R}(x_0)\subset\rn$ centered at $x_0$ of radius $R$. We consider a typical weight $w(x):=d^{\alpha}(x),\ d(x)={\rm dist}(x,\partial B_{R}(x_0))=R-|x-x_0|$. 
\subsection{Asymptotic behavior of radially symmetric problems}
\begin{theorem}\label{thm:const}
Let $p>1,\ R>0, \gamma-\alpha\ge 0$ and consider the following singular problem
\begin{align}\label{eq:14}
\begin{cases}
\psi''+\left(\dfrac{N-1}{r}-\dfrac{\alpha}{R-r} \right)\psi'= a(r)(R-r)^{\gamma-\alpha}\psi^p \qquad \text{in}\ (0,R),\\
\psi'(0)=0,\quad \lim\limits_{r\to R} \psi(r) = +\infty,
\end{cases}
\end{align}
where $a\in C([0,R];(0,\infty))$. Then, for each $\varepsilon>0$ the problem~\eqref{eq:14} possesses a positive solution $\psi_{\varepsilon}$ such that
\begin{align}\label{eq:14a}
1-\varepsilon\le \liminf\limits_{r\to R}\dfrac{\psi_{\varepsilon}(r)}{K(R-r)^{-\beta}}
\le \limsup\limits_{r\to R}\dfrac{\psi_{\varepsilon}(r)}{K(R-r)^{-\beta}}
\le 1+\varepsilon,
\end{align}
where $K:= (\beta(\beta+1-\alpha)/a(R))^{1/(p-1)}$ and
\begin{align}\label{eq:beta}
\beta=\frac{2+\gamma-\alpha}{p-1}.
\end{align}
Therefore, for each $x_0\in \mathbb{R}^{N}$, the function
$ u_{\varepsilon}(x):=\psi_{\varepsilon}(r),\  r:= |x-x_0|,
$
gives us a radially symmetric nonnegative solution $u$ of
\begin{align}\label{eq:rad:1}
\begin{cases}
-{\rm div}(d^{\alpha}\nabla u)+a(|x-x_0|)d^{\gamma}(x)u^p =0 \qquad \text{in}\ B_R(x_0),\\
~~~~~~~~~~~~~~~~~~~~~~~~~~~~~~~u =\infty\qquad \text{on}\ \partial B_{R}(x_0),
\end{cases}
\end{align}
satisfying
\begin{align}\label{eq:rad:2}
1-\varepsilon\le \liminf\limits_{\rm d(x, \partial B_{R}(x_0))\to 0}\frac{u_{\varepsilon}(x)}{K d(x)^{-\beta}}\le \limsup\limits_{\rm d(x, \partial B_{R}(x_0))\to 0}\frac{u_{\varepsilon}(x)}{K d(x)^{-\beta}}\le 1+\varepsilon.
\end{align}
\end{theorem}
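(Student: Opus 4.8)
The plan is first to observe that radial functions $u(x)=\psi(|x-x_0|)$ solve the weighted problem \eqref{eq:rad:1} exactly when $\psi$ solves the ODE \eqref{eq:14}: with $d=R-r$ one has ${\rm div}(d^{\alpha}\nabla u)=(R-r)^{\alpha}[\psi''+(\frac{N-1}{r}-\frac{\alpha}{R-r})\psi']$, so that constructing $\psi_{\varepsilon}$ produces $u_{\varepsilon}$ and \eqref{eq:14a} transcribes directly into \eqref{eq:rad:2}. To locate the sharp rate I would insert the ansatz $\psi=C(R-r)^{-\beta}$ into the ODE operator: the two most singular terms $\psi''$ and $-\frac{\alpha}{R-r}\psi'$ produce $C\beta(\beta+1-\alpha)(R-r)^{-\beta-2}$, while the reaction produces $a(r)C^{p}(R-r)^{\gamma-\alpha-\beta p}$. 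Balancing the exponents forces $\beta=(2+\gamma-\alpha)/(p-1)$ as in \eqref{eq:beta}, and balancing the coefficients at $r=R$ forces $C^{p-1}=\beta(\beta+1-\alpha)/a(R)$, i.e. $C=K$; the remaining term $\frac{N-1}{r}\psi'$ is of lower order $(R-r)^{-\beta-1}$ near the boundary and does not affect the leading balance.

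\textbf{Existence of a large solution.} Next I would produce, for the fixed $\varepsilon$, global sub- and supersolutions on $B_{R}(x_0)$ that blow up at the boundary, and invoke Theorem \ref{thm:exis}. A subsolution is $\underline\psi=\underline K(R-r)^{-\beta}$ with $\underline K$ so small that $a(r)\underline K^{p-1}\le\beta(\beta+1-\alpha)$ for every $r\in[0,R]$; then the leading coefficient is nonnegative and, since for a subsolution the first-order term $\frac{N-1}{r}\psi'$ carries the favourable sign, the inequality holds on all of $(0,R)$, including the centre, where $u=\underline K(R-|x-x_0|)^{-\beta}$ remains a genuine weak subsolution (the only non-smoothness is a mild conical corner at $x_0$, which for $N\ge2$ contributes no concentrated term of the wrong sign). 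For the supersolution I would instead take $\overline\psi=\Lambda(R^{2}-r^{2})^{-\beta}$ with $\Lambda$ large: the even argument $R^{2}-r^{2}$ makes $\overline\psi$ smooth at the centre with $\overline\psi\,'(0)=0$, which is essential because for a supersolution $\frac{N-1}{r}\psi'$ has the \emph{unfavourable} sign and the plain power would fail near $x_0$. Away from the centre $\overline\psi$ has the same boundary rate as a power, so a large $\Lambda$ renders it a global supersolution dominating $\underline\psi$. Theorem \ref{thm:exis} then yields a positive large solution $\psi$ with $\underline\psi\le\psi\le\overline\psi$, continuous, in $W^{1,2}_{\rm loc}$, and radial, hence with $\psi'(0)=0$.

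\textbf{Sharpening the constant.} To upgrade the crude constants $\underline K,\Lambda$ to the sharp factors $1\pm\varepsilon$ I would run a sweeping comparison in a boundary strip $S_{\delta}=\{R-\delta<r<R\}$. By continuity of $a$ at $R$, one fixes $\delta$ so small that $(1+\varepsilon)K(R-r)^{-\beta}$ is a supersolution and $(1-\varepsilon)K(R-r)^{-\beta}$ a subsolution throughout $S_{\delta}$: the leading coefficients are proportional to $a(R)-a(r)(1\pm\varepsilon)^{p-1}$, which carry the required signs near $R$, while the $\frac{N-1}{r}$ contribution is then negligible. Displacing the blow-up point of these barriers slightly inward (resp. outward) and adding (resp. subtracting) a constant---operations that preserve the super-/subsolution property because $t\mapsto t^{p}$ is increasing---I can arrange the barrier to dominate (resp. be dominated by) $\psi$ on $\partial S_{\delta}$, apply the comparison principle on $S_{\delta}$ (where $d^{\alpha}$ is bounded above and below, so the problem is uniformly elliptic), and finally let the displacement tend to $0$. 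This sandwiches $\psi$ between $(1-\varepsilon)K(R-r)^{-\beta}-M'$ and $(1+\varepsilon)K(R-r)^{-\beta}+M$ near $r=R$; dividing by $K(R-r)^{-\beta}$ and letting $r\to R$ annihilates the additive constants and yields exactly \eqref{eq:14a}, whence \eqref{eq:rad:2}.

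\textbf{Main obstacle.} The crux is this sweeping step. Once the blow-up point of a barrier is displaced from $R$, the barrier no longer solves the original equation, and the mismatch between its own singularity and the equation's singular coefficient $-\alpha/(R-r)$ and weight $(R-r)^{\gamma-\alpha}$ generates error terms that must be shown to have the correct, sign-preserving order as the displacement vanishes. Equally delicate is the justification, in the degenerate weighted framework, of a comparison principle together with the continuity of $\psi$ up to the inner edge of the strip, both needed to run the pointwise comparison right up to the blow-up interface; these are precisely the points at which the argument of \cite{lopez2003} must be adapted to the present weighted setting.
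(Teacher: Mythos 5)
Your reduction to the ODE and your identification of $\beta$ and $K$ agree with the paper, and invoking Theorem~\ref{thm:exis} is indeed the intended mechanism. The gap is in your third step. Having constructed a solution between \emph{crude} barriers ($\underline K(R-r)^{-\beta}$ and $\Lambda(R^2-r^2)^{-\beta}$), you must transfer the sharp constants $1\pm\varepsilon$ to that already-fixed solution, and for this you invoke a comparison principle plus a sweeping argument in the strip $S_\delta=\{R-\delta<r<R\}$. Neither is available: the paper proves no comparison principle for this degenerate semilinear operator, and your justification for one is incorrect --- you assert that on $S_\delta$ the weight $d^\alpha$ ``is bounded above and below, so the problem is uniformly elliptic,'' but $S_\delta$ abuts $\partial B_R(x_0)$, where $d=R-r\to 0$, so $d^{\alpha}$ tends to $0$ (for $\alpha>0$) or to $\infty$ (for $\alpha<0$); the strip is exactly where the degeneracy or singularity lives. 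The error terms created by displacing the blow-up point of a barrier are likewise never estimated. You flag all of this yourself as the ``main obstacle,'' which is an accurate self-assessment: as written, the sharpening step does not go through.

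The missing idea, which is how the paper proceeds and which makes the entire comparison machinery unnecessary, is to build the factors $1\pm\varepsilon$ into the barriers \emph{before} applying Theorem~\ref{thm:exis}. The paper takes $\bar\psi_\varepsilon(r)=A+\bar B\,(r/R)^2(R-r)^{-\beta}$ with $\bar B=(1+\varepsilon)\bigl(\beta(\beta+1-\alpha)/a(R)\bigr)^{1/(p-1)}$ and $A$ large, and $\underline\psi_\varepsilon(r)=\max\bigl\{0,\,C+\underline B\,(r/R)^2(R-r)^{-\beta}\bigr\}$ with $\underline B=(1-\varepsilon)\bigl(\beta(\beta+1-\alpha)/a(R)\bigr)^{1/(p-1)}$ and $C<0$ chosen using the monotone dependence of the set where the subsolution is positive on $C$. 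The factor $(r/R)^2$ simultaneously handles regularity at the origin (the role your $(R^2-r^2)^{-\beta}$ ansatz plays) and the $(N-1)/r$ term, and the continuity of $a$ at $R$ yields the super/subsolution inequalities in a strip $(R-\delta,R]$, hence globally after adjusting $A$ and $C$. The solution $\psi_\varepsilon$ that Theorem~\ref{thm:exis} then produces satisfies $\underline\psi_\varepsilon\le\psi_\varepsilon\le\bar\psi_\varepsilon$ pointwise, and dividing by $K(R-r)^{-\beta}$ and letting $r\to R$ gives \eqref{eq:14a} immediately, since the additive constants $A$ and $C$ are of lower order than $(R-r)^{-\beta}$. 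Your first two steps are thus sound, but the third should be replaced by this choice of barriers rather than repaired.
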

\begin{remark}
We see by  simple computations for a one dimensional degenerate or singular BVP
\begin{align}\label{eq:12}
\begin{cases}
((R-x)^{\alpha}u'(x))'=b(x)u^p,\quad x\in(0,R),\\
\lim\limits_{x\to R}u(x)=\infty,\quad
u'(0)=0,
\end{cases}
\end{align}
where $b(x)=B(x)(R-x)^{\gamma},\ B(R)\neq 0,\ \gamma-\alpha\ge 0$.
 Next we emply the following change of variables
\begin{align}
u(x)=(R-x)^{-\beta}\psi(x),\ x\in [0,R],
\end{align}
where $\beta>0$, to be determined, to the equation~\eqref{eq:12} and we have
\begin{align}\label{eq:13}
(R-x)^{-\beta+\alpha}\psi ''(x)+(2\beta-\alpha)(R-x)^{-\beta-1+\alpha}\psi'(x)~~~~~~~~~~~~~~~~~~~~~\nonumber \\ 
+ \beta(\beta+1-\alpha)(R-x)^{-\beta-2+\alpha}\psi(x)
=(R-x)^{\gamma-\beta p}\psi^{p}(x),
\end{align}
subject to the boundary conditions,
$
0<\psi(R)<\infty,
$  
so that exact blow-up rate of $u$ at $R$, will be given by $\beta$. Multiplying~\eqref{eq:13} by $(R-x)^{\beta+2-\alpha}$, we have
\begin{align*}
(R-x)^{2}\psi ''(x)+(2\beta-\alpha)(R-x)\psi'(x) + \beta(\beta+1-\alpha)\psi(x)=(R-x)^{\gamma-\beta p+\beta-\alpha+2}\psi^{p}(x).
\end{align*}
Since we are looking for the blow-up rate when $x\to R$, we assume
\begin{align*}
\lim\limits_{x\to R}(R-x)^{2}\psi ''(x)=0=\lim\limits_{x\to R}(R-x)\psi'(x).
\end{align*}
This necessarily gives us
$
\psi(R)=\left[\frac{\beta(\beta+1-\alpha)}{B(R)}\right]^{\frac{1}{p-1}}\quad\text{and}\quad 
\beta=\frac{2+\gamma-\alpha}{p-1}.
$
Now we derive the same blow up rate for the radial symmetric problem in many dimension.
\end{remark}
\begin{proof}[Proof of Theorem~\ref{thm:const}]
We shall construct a pair of sub- and supersolutions which essentially blow up on the boundary.\\
{\em Construction of supersolution:}
We claim that for each sufficiently small $\varepsilon>0$, there exists a constant $A_{\varepsilon}>0$ for which the function
$
\bar{\psi}_{\varepsilon}(r):= A+\bar{B}(r/R)^{2}(R-r)^{-\beta}
$
 provides us with a positive {\em supersolution} of~\eqref{eq:14} for each $A> A_{\varepsilon}$ if
\begin{align}\label{eq:16}
\bar{B} = (1+\varepsilon)
(\beta(\beta+1-\alpha)/a(R))^{1/(p-1)}.
\end{align}
Indeed, its positive in the sense that $\bar{\psi}_{\varepsilon}(0)=A>0,\  \bar{\psi}_{\varepsilon}'(r)\ge 0$ and $\lim\limits_{r\to R}\bar{\psi}_{\varepsilon}(r)=\infty$. 
Suppose that~\eqref{eq:beta} holds,
then we see that $\bar{\psi}_{\varepsilon}$ is supersolution to~\eqref{eq:14}, if and only if,
\begin{align}\label{eq:17}
\frac{2N\bar{B}}{R^2}(R-r)^2 + (3\beta+N\beta-2\alpha)\frac{\bar{B}}{R^2}r(R-r) + \bar{B}\beta(\beta+1-\alpha)\left(\frac{r}{R}\right)^{2}\nonumber\\
\le a(r)\left( A(R-r)^{\beta}+\bar{B}\left(\frac{r}{R}\right)^{2}\right)^{p}.
\end{align}
Note also that, at $r=R$, it turns out that
\begin{align*}
\bar{B}\ge (\beta(\beta+1-\alpha)/a(R))^{1/(p-1)}.
\end{align*}
Therefore, the choice of~\eqref{eq:16} and the continuity of $\bar{\psi}_{\varepsilon}$ upto $r=R$, the inequality~\eqref{eq:17} satisfied in $(R-\delta,R]$ for some $\delta=\delta(\varepsilon)>0$. Eventually, sufficiently large $A$ implies that the inequality~\eqref{eq:17} is satisfied in $[0,R]$. This shows one of the {\em construction of  supersolution}.

\noindent
{\em Construction of subsolution:}
Now we want to construct a subsolution to problem~\eqref{eq:14} by choosing, for each $\varepsilon>0$, there exists $C<0$ for which the function
\begin{align*}
\underline{\psi}_{\varepsilon}(r):= \max\left\{ 0, C+\underline{B}\left(\frac{r}{R}\right)^{2}(R-r)^{-\beta}\right\},
\end{align*}
shall provide us with nonnegative subsolution to~\eqref{eq:14} for which we need to determine a suitable choice of $\underline{B}$. 
For this we see that 
if 
\begin{align}\label{eq:19}
C+\underline{B}\left(\frac{r}{R}\right)^{2}(R-r)^{-\beta}\ge 0,
\end{align}
in some interval $(\bar{c},R)\subset (0,R)$, 
and suppose that~\eqref{eq:beta} hold then it is equivalently to say that $\underline{\psi}_{\varepsilon}$ is a subsolution if and only if
\begin{align}\label{eq:18}
\frac{2N\underline{B}}{R^2}(R-r)^2 + (3\beta+N\beta-2\alpha)\frac{\underline{B}}{R^2}r(R-r) + \underline{B}\beta(\beta+1-\alpha)\left(\frac{r}{R}\right)^{2}\nonumber\\
\ge
a(r) \left( C(R-r)^{\beta}+\underline{B}\left(\frac{r}{R}\right)^{2}\right)^{p}.
\end{align}
Since $f(r)$ in~\eqref{eq:19} is an nondecreasing function in $[0,R]$, indeed, $f'(r)\ge 0,\ r\in[0,R]$, we have for each $C<0$, there exists a constant $\bar{c}:=\bar{c}(C)\in (0,R)$ for which $f(r)<0$ if $r\in [0,\bar{c})$. On the other hand $f(r)\ge 0$ for $r\in [\bar{c},R)$. Moreover, $\bar{c}$ is decreasing as $C\to 0-$ and
\begin{align}\label{eq:20}
\lim\limits_{C\to-\infty}\bar{c}(C)=R,\ \lim\limits_{C\to 0}\bar{c}(C)=0. 
\end{align}
It is more than enough to have
\begin{align}\label{eq:19a}
\beta(\beta+1-\alpha)
\ge
a(r) \underline{B}^{p-1}\left(\frac{r}{R}\right)^{2(p-1)},\ \text{for each}\ r\in [\bar{c},R),
\end{align}
and since we have $C<0$,  the following condition
\begin{align*}
\underline{B}\beta(\beta+1-\alpha)\left(\frac{r}{R}\right)^{2}
\ge
a(r) \left( C(R-r)^{\beta}+\underline{B}\left(\frac{r}{R}\right)^{2}\right)^{p}, \ \text{for each}\ r\in [\bar{c},R)
\end{align*}
holds to satisfy~\eqref{eq:18}. So we can make a choice for $\underline{B}$ as follows
\begin{align}\label{eq:21}
\underline{B} = (1-\varepsilon)(\beta(\beta+1-\alpha)/a(R))^{1/(p-1)},
\end{align}
and there exists a constant for that, namely, $\tilde{\delta}=\tilde{\delta}(\varepsilon)>0$ for which~\eqref{eq:19a} is satisfied in $[R-\tilde{\delta},R)$. Moreover, because of~\eqref{eq:20}, there exists $C<0$ such that
$\bar{c}(C)=R-\delta(\varepsilon).$
For this choice of such $C$, it is proved that $\underline{\psi}_{\varepsilon}$ is a subsolution to~\eqref{eq:14}.
Together with the above growth conditions for sub- and supersolutions, i.e.
\begin{align*}
\lim\limits_{r\to R}\frac{\bar{\psi}_{\varepsilon}}{\bar{B}(R-r)^{-\beta}}=\lim\limits_{r\to R}\frac{\underline{\psi}_{\varepsilon}}{\underline{B}(R-r)^{-\beta}}=1,
\end{align*}
where $\bar{B},\ \underline{B}$ are the constants defined in~\eqref{eq:16} and~\eqref{eq:21} respectively, it follows from the construction of sub- and supersolution that it satisfy the conditions \eqref{eq:infty:cond} of Theorem~\ref{thm:exis}. Thereby we have an existence of large solution to~\eqref{eq:14}, denoted by $\psi_{\varepsilon}$, satisfying~\eqref{eq:14a}, and hence to~\eqref{eq:rad:1} satisfying~\eqref{eq:rad:2}.
\end{proof}


\bibliographystyle{plain}
\bibliography{../Large_Solutions/ref1}

\end{document}